\newcommand{\frakell}{\ensuremath{{\bm \ell}}}
\newcommand{\out}[1]{}
\def\sectionD#1#2{
\section{#1}
\label{section:#2}
}
\def\subsectionD#1#2{
\subsection{#1}
\label{section:#2}
}
\def\paragraphD#1#2{
\paragraph*{#1}
\label{par:#2}
}
\newcommand{\lemlab}[1]{\label{lemma:#1}}
\newcommand{\theolab}[1]{\label{theo:#1}}
\newcommand{\alglab}[1]{\label{alg:#1}}
\newcommand{\proplab}[1]{\label{prop:#1}}
\newcommand{\lemref}[1]{Lemma \ref{lemma:#1}}
\newcommand{\theoref}[1]{Theorem \ref{theo:#1}}
\newcommand{\algref}[1]{Algorithm \ref{alg:#1}}
\newcommand{\figref}[1]{Figure \ref{fig:#1}}
\renewcommand{\eqref}[1]{(\ref{eq:#1})}
\newcommand{\secref}[1]{Section \ref{section:#1}}
\newcommand{\propref}[1]{Proposition \ref{prop:#1}}
\title{Graded Sparse Graphs and Matroids
\footnote{C.~S.~Calude, G.~Stefanescu, and M.~Zimand (eds.).
 {\em  Combinatorics and Related Areas. A Collection of Papers in Honour of the 65th Birthday of
Ioan Tomescu.}}}
\author{
{\bfseries Audrey Lee}\\
(University of Massachusetts, Amherst, MA\\ alee@cs.umass.edu)
\and
{\bfseries Ileana Streinu}\\
(Smith College, Northampton, MA \\ streinu@cs.smith.edu)
\and
{\bfseries Louis Theran}\\
(University of Massachusetts, Amherst, MA \\ theran@cs.umass.edu)
}
\title{Graded sparse graphs and matroids}
\author{{\bf Audrey Lee} \\ (University of Massachusetts, Amherst, MA \\ alee@cs.umass.edu) \and {\bf Ileana Streinu} \\ (Smith College, Northampton, MA \\ streinu@cs.smith.edu) \and 
{\bf Louis Theran} \\ (University of Massachusetts, Amherst, MA \\ theran@cs.umass.edu)}
\begin{document}
\maketitle

\begin{abstract}
Sparse graphs and their associated matroids play an important role in 
rigidity theory, where they capture the combinatorics 
of some families of generic minimally rigid structures.  We define a
new family called {\bf graded sparse graphs}, arising from 
generically pinned bar-and-joint frameworks, and prove that 
they also form matroids.
We also address several algorithmic problems on graded sparse graphs: {\bf Decision}, {\bf Spanning}, {\bf Extraction}, {\bf Components}, {\bf Optimization}, and {\bf Extension}.  We sketch variations on {\bf pebble game algorithms} to solve them.
\end{abstract}

\begin{keywords} computational geometry, hypergraph, rigidity theory, matroid, pebble game \end{keywords}

\begin{category} F.2.2, G.2.2\end{category}

\sectionD{Introduction}{intro}
A {\bf bar-and-joint framework} is a planar structure made of fixed-length {\bf  bars}
connected by {\bf universal joints}.  Its allowed {\bf motions} are those that preserve the 
lengths and connectivity of the bars.  If the allowed motions are all {\bf trivial rigid
motions}, then the framework is {\bf rigid};  otherwise it is flexible.

Laman's foundational theorem \cite{laman}
characterizes generic minimally rigid bar-and-joint frameworks in terms of 
their underlying graph.  A Laman graph has $2n-3$ edges and the additional
property that every induced subgraph on $n'$ vertices spans at most $2n'-3$
edges.  Laman's theorem characterizes the graphs of generic minimally rigid
frameworks as Laman graphs.

Laman's hereditary counts have been recently generalized \cite{whiteley:Matroids:1996, pebblegame, hypergraphs} to { \em $(k,\ell)$-sparse graphs} and {\em hypergraphs}, which form the 
independent sets of a matroid called the {\bf $(k,\ell)$-sparsity matroid}.

\begin{figure}[htbp]
    \centering
%    \subfigure[]{\includegraphics[width=0.4\textwidth]{k3-uncolored-sliders}}
%    \subfigure[]{\includegraphics[width=0.4\textwidth]{k3-augmented-uncolored}}
    \subfigure[]{\includegraphics[width=0.4\textwidth]{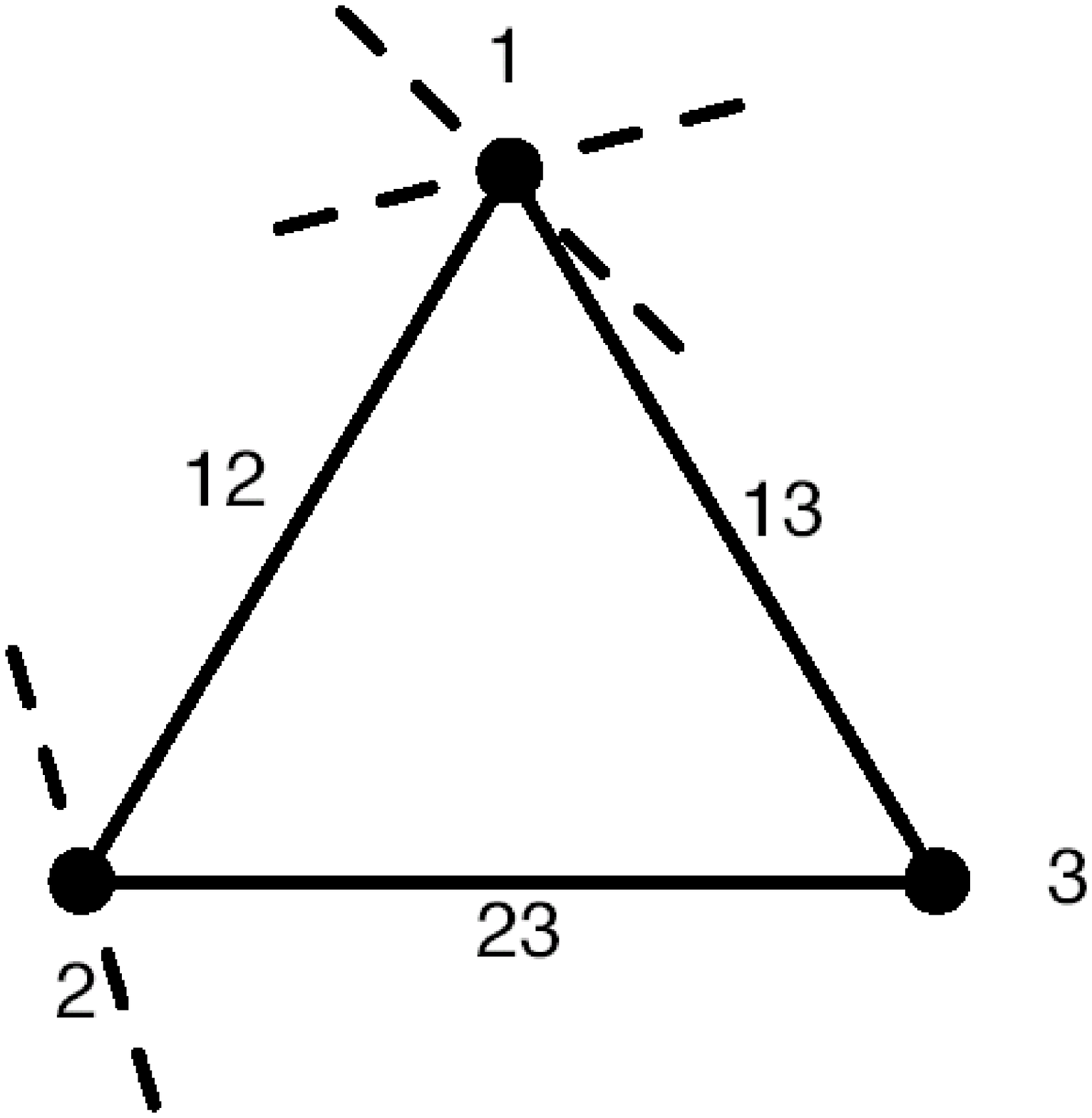}}
    \subfigure[]{\includegraphics[width=0.4\textwidth]{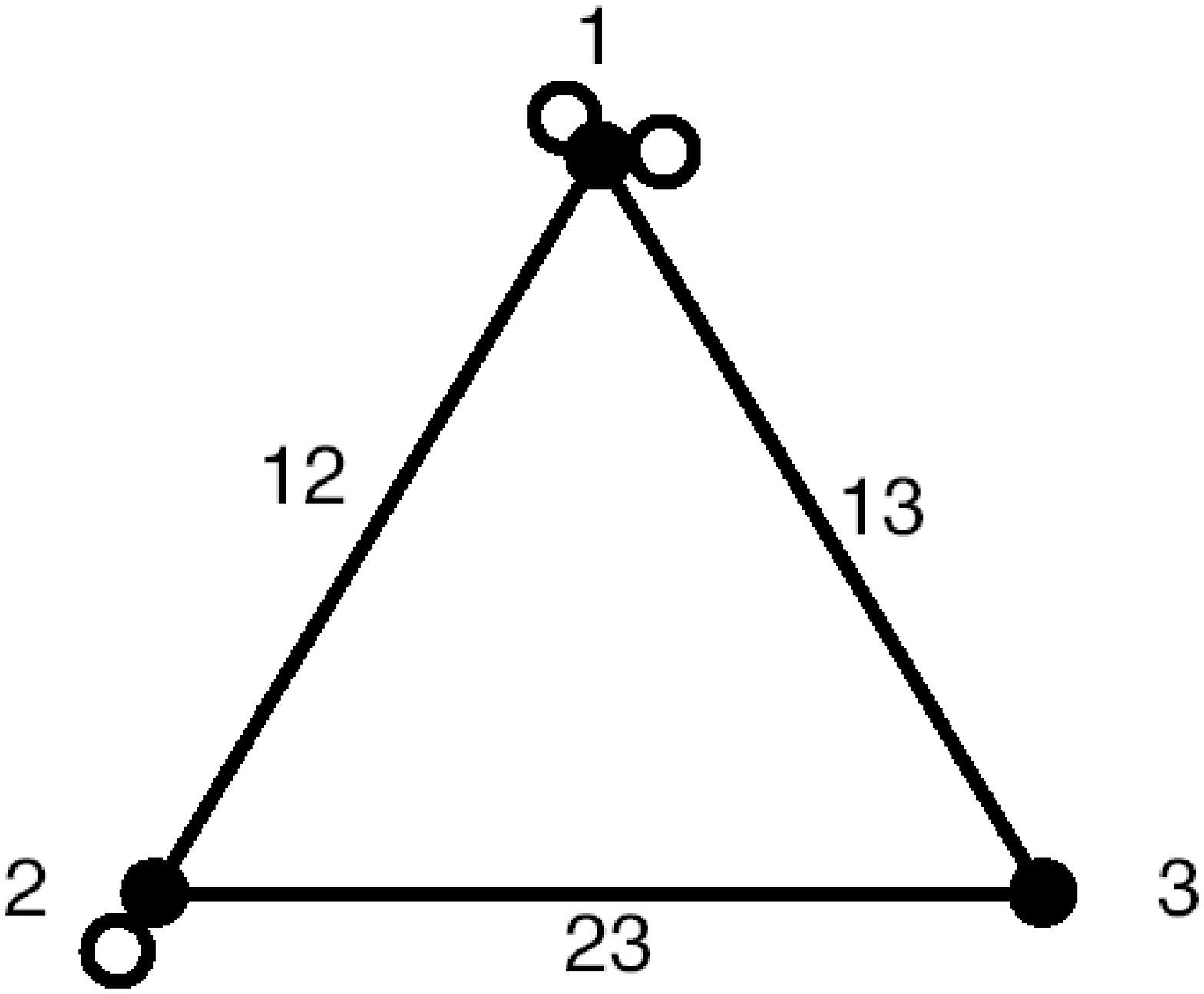}}
    \caption{Example of a bar-slider framework and its associated
graph: (a) a bar-slider framework; (b)
    the same framework given combinatorially as a graph with edges and loops}
    \label{fig:bar-slider-framework}
\end{figure}

In \cite{sliders,sliderscccg} 
we considered the problem of {\bf pinning} a 
bar-and-joint framework by adding {\bf sliders}.  
Pinning means completely immobilizing the structure by eliminating 
all the degrees of freedom, including the trivial rigid motions (rotations 
and translations).
We do this by constraining vertices to move along  generic lines, much like a
slider joint in mechanics.  A {\bf slider} at vertex $i$ is a line
$L_i$ associated with the vertex.  A structure made from bars, joints, and sliders is called 
a {\bf bar-slider} framework. 

We model a bar-slider framework combinatorially with a 
graph that has vertices for the joints, with {\bf edges} (2 endpoints) for the bars 
 and {\bf loops} (1 endpoint) for the sliders.  
\figref{bar-slider-framework} shows an example
of a bar-slider framework and its associated graph; the sliders are shown as
dotted lines running through a vertex.
The main rigidity result of \cite{sliders} is a Laman-type theorem.

\begin{proposition}[{\bf (Bar-slider framework rigidity)}]
	\proplab{slider-theorem}
Let $G$ be a graph with $2n-k$ edges and $k$ loops.  
$G$ is realizable as a generic minimally pinned bar-and-slider framework if 
and only if:
(1) Every subset of $n'$ vertices spans 
at most $2n'-3$ edges (not counting loops), and
(2) Every induced subgraph on $n'$ vertices spans at most $2n'$ edges and loops.
\end{proposition}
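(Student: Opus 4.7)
The plan is to reduce the geometric statement to a rank computation on the \emph{pinned rigidity matrix} $M(G,p,L)$. This is a matrix with one row per edge or loop and two columns per vertex, so it has $2n$ columns and, for the $G$ of the statement, exactly $2n$ rows. For a bar $ij$, the row is the usual bar-joint rigidity row, carrying $p_i-p_j$ in the block of vertex $i$, $p_j-p_i$ in the block of vertex $j$, and zeros elsewhere. For a slider at vertex $i$, the row carries the unit normal to $L_i$ in the block of vertex $i$ and zeros elsewhere. The framework is minimally pinned at $(p,L)$ iff the square matrix $M$ has full rank $2n$. So the proposition reduces to showing that for generic $(p,L)$ the rows of $M$ are independent iff conditions (1) and (2) hold.

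For necessity I would argue directly from the row-support structure of $M$. Fix $V'\subseteq V$ with $|V'|=n'$ and restrict $M$ to the rows coming from edges and loops induced on $V'$; these rows have all their nonzero entries in the $2n'$ columns indexed by $V'$, so independence forces $|E(V')|+|L(V')|\le 2n'$, which is (2). Restricting further to the induced bar rows gives the standard bar-joint rigidity matrix of $G[V']$, and Laman's theorem forces $|E(V')|\le 2n'-3$, which is (1). A global application of (1) with $V'=V$ gives $k\ge 3$, matching the pinning intuition that at least three constraints are needed to kill the $3$-dimensional space of planar trivial motions.

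For sufficiency I would go through the matroidal framework developed in the rest of the paper, where (1) and (2) are shown to define the independent sets of a matroid $\mathcal{M}(G)$ on $E(G)\cup L(G)$. It then suffices to prove that $\mathcal{M}(G)$ coincides with the linear matroid of the rows of $M$ at a generic $(p,L)$; since both sides have the same ground set and the combinatorial side bounds the linear side by the necessity argument, I only need to exhibit, for each graph $G$ satisfying (1), (2) with $|E|=2n-k$ and $|L|=k$, one realization at which $M$ attains rank $2n$. I would do this by a Henneberg-type induction on $n$: identify a base case (a triangle with three sliders, for instance), show that any such $G$ admits a reduction in which a vertex of total degree $2$ (loops counted with weight one) is removed and possibly replaced by a compensating edge or loop, and verify that the reduction stays inside the graded-sparse family; then show on the geometry side that re-inserting the vertex with its two new constraints preserves independence.

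The hardest step is the inductive re-insertion. Combinatorially, one must show that a degree-$2$ reduction always exists in the graded-sparse family and can be carried out without violating (1) or (2) in the smaller graph; this is a matroid-exchange argument, but the mixed edge/loop bookkeeping needs care and has several subcases depending on whether the removed vertex is incident to two bars, two sliders, or one of each. Geometrically, each of the same three subcases requires checking that the $2\times 2$ block of $M$ involving only the re-inserted vertex's columns has nonzero determinant as a polynomial in the parameters; this is a short explicit computation in each case but must be done once per subcase. Once both the combinatorial reduction and the geometric extension are in place, the sufficiency direction follows by induction, completing the Laman-type characterization.
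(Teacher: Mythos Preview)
This proposition is not proved in the present paper. It is quoted in the introduction as the main rigidity result of \cite{sliders,sliderscccg} and serves only to motivate the graded-sparsity definitions that follow; the paper itself establishes nothing about bar-slider frameworks or rigidity matrices. There is therefore no in-paper proof to compare your attempt against.

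On your sketch itself: the template---pinned rigidity matrix, necessity from row supports plus Laman, sufficiency by an inductive full-rank construction---is standard and sound in outline. One concrete gap: you base the induction on removing a vertex of total degree~$2$, but such a vertex need not exist. With $2n-k$ edges and $k$ loops (loops counted once), the degree sum is $4n-k$, so you are only guaranteed a vertex of degree at most~$3$. For instance, take $K_4$ minus one edge, with one loop on one of the degree-$2$ vertices and two loops on the other: this satisfies (1) and (2) with $n=4$, $k=3$, and has minimum degree~$3$. A complete Henneberg-style argument therefore also requires a degree-$3$ reduction (an edge- or loop-split) together with the combinatorial lemma that some choice of split keeps the smaller graph inside the class; that lemma, not the three degree-$2$ subcases you list, is where the real difficulty lies. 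Your appeal to Theorem~\ref{theo:graded-matroid} is also inessential: once you have necessity and a basis-by-basis full-rank realization, equality of the linear and combinatorial matroids follows directly, without separately invoking the graded-sparsity matroid.
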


The generalization of the Laman counts from \propref{slider-theorem} leads to a 
{\bf pinning matroid}, which has as its bases the graphs of minimally pinned
generic bar-slider frameworks.

\paragraphD{Contributions.}{contrib} 
In this paper, we generalize the counts from \propref{slider-theorem} to what we call {\bf graded sparsity} on hypergraphs.  Graded sparsity has the same relationship to bar-slider structures as sparsity has to bar-and-joint frameworks. Complete definitions will be given in 
\secref{gradedmatroids}. We also briefly indicate the algorithmic solutions to the following fundamental problems (first posed in the context of pinning in \cite{sliders}), generalized to graded sparsity. 
\begin{description}
	\item[{\bf Decision problem:}] Is $G$ a  graded sparse graph?
    \item[{\bf Spanning problem:}] Does $G$ contain a spanning graded sparse graph?
	\item[{\bf Extraction problem:}] Find a maximum sized graded sparse subgraph
	of $G$.
	\item[{\bf Optimization problem:}]  Compute an optimal
	graded sparse subgraph of $G$ with respect to an arbitrary linear weight 
	function on the edges. 
	\item[{\bf Extension problem:}] Find a minimum size set of edges to add to $G$, so that it      becomes spanning. 
	\item [{\bf Components problem:}] Find the	{\it components} (which generalize rigid components to graded sparsity) of $G$.
\end{description}

For these problems, we give efficient, easily implementable algorithms
based on pebble games for general sparse graphs (see the papers
\cite{pebblegame}, \cite{hypergraphs}, \cite{components}).

\sectionD{Preliminaries}{prelim}
In this section, we give the necessary background (from previous work) to understand our contributions.  We start 
with sparse graphs and hypergraphs.

\subsectionD{Sparse graphs and hypergraphs.}{sparsity}
A {\bf hypergraph} $G=(V,E)$ is a finite set $V$ of $n$ vertices with a
set $E$ of $m$ edges that are subsets of $V$.  We allow multiple distinguished
copies of edges; i.e., our hypergraphs are multigraphs.  The {\bf dimension}
of an edge is the number of vertices in it; we call an edge of dimension $d$
a $d$-edge.  We call the vertices in an edge its {\bf endpoints}.  The 
concept of directed graphs extends to hypergraphs.  In a directed hypergraph,
each edge is given an {\bf orientation} ``away'' from a distinguished endpoint,
which we call its {\bf tail}.

A hypergraph is {\bf $(k,\ell)$-sparse} if every edge-induced subgraph with $m'$ edges
spanning $n'$ vertices satisfies $m'\le kn'-\ell$; a hypergraph that is $(k,\ell)$-sparse 
(shortly, sparse) and has $kn-\ell$ edges 
is called {\bf $(k,\ell)$-tight} (shortly, tight).  
Maximal tight subgraphs of a sparse hypergraph are called 
{\bf components}.

Sparse hypergraphs have a matroidal structure, first observed by White and Whiteley in the appendix of \cite{whiteley:Matroids:1996}. More specifically:

\begin{proposition}[{\bf (\cite{hypergraphs})}]\proplab{sparse-hypergraphs}
Let $G$ be a hypergraph on $n$ vertices.  For large enough $n$, the $(k,\ell)$-sparse
hypergraphs form the independent sets of a matroid that has tight hypergraphs as its bases.
\end{proposition}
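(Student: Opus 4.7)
The plan is to verify the three matroid axioms for the set system whose ground set is the (multi)set of all hyperedges on $V$ and whose independent sets are the edge sets $I$ for which $(V,I)$ is $(k,\ell)$-sparse. Nonemptiness (the empty set is sparse) and the hereditary property are immediate, since deleting edges can only decrease the edge count on any vertex subset. The substance is the exchange axiom.

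The key technical ingredient I would establish is the supermodularity of the edge-count function on a fixed sparse hypergraph: for all $A, B \subseteq V$,
\[
|I_A| + |I_B| \le |I_{A \cup B}| + |I_{A \cap B}|,
\]
where $I_S$ denotes the edges of $I$ with all endpoints in $S$. A case analysis on where the vertices of each hyperedge sit relative to $A$ and $B$ proves this; hyperedges that straddle $A\setminus B$ and $B \setminus A$ contribute to $I_{A \cup B}$ only, which is what drives the inequality (the remaining cases give equality). Chaining the supermodular inequality with the sparsity bounds on $A \cup B$ and $A \cap B$ yields the standard \emph{union-of-tight-sets lemma}: if $A, B$ are both tight for $I$ and $|A \cap B|$ is large enough that $k|A \cap B|-\ell \ge 0$, then $A \cup B$ and $A \cap B$ are also tight.

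With this lemma in hand, the exchange axiom is proved by contradiction. Let $I_1, I_2$ be sparse with $|I_1| < |I_2|$, and suppose no $e \in I_2 \setminus I_1$ can be added to $I_1$ without violating sparsity. Each such $e$ is then contained in a tight block $B_e$ for $I_1$ with $V_e \subseteq B_e$. Iteratively applying the union-of-tight-sets lemma produces a single tight set $B^* \subseteq V$ for $I_1$ containing $V_e$ for every $e \in I_2 \setminus I_1$. Then
\[
|(I_2)_{B^*}| \le k|B^*|-\ell = |(I_1)_{B^*}|,
\]
and every edge of $I_2 \setminus I_1$ lies in $(I_2)_{B^*}$, while every edge of $I_2$ with endpoints outside $B^*$ must already lie in $I_1$ (otherwise it would be in $I_2 \setminus I_1$ with $V_e \not\subseteq B^*$, contradicting the construction of $B^*$). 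Counting gives $|I_2| \le |I_1|$, the desired contradiction. The bases of the resulting matroid are then the maximal sparse edge sets, which by a direct count are precisely the $(k,\ell)$-tight hypergraphs.

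The main obstacle is the ``$n$ large enough'' hypothesis: it must be chosen so that every tight set arising in the union cascade satisfies $k|S|-\ell \ge 0$, a condition needed for the union-of-tight-sets lemma to be applicable; this is essentially the sole role played by the hypothesis, and a threshold like $n \ge \lceil \ell/k \rceil$ together with a bound on the maximum hyperedge dimension suffices. A secondary subtlety is the hypergraph form of the supermodular inequality: because a hyperedge contributes to $I_S$ only when \emph{all} its endpoints sit in $S$, the verification is slightly more delicate than for ordinary graphs, but the inequality still goes in the correct direction and is where hypergraph sparsity genuinely differs from graph sparsity.
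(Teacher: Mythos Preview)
The paper does not prove this proposition; it is quoted from \cite{hypergraphs} as background, so there is no in-paper argument to compare against. (The matroid proof the paper \emph{does} give is for the graded generalization, Theorem~1, and there the authors work with the \emph{circuit} axioms and an explicit edge count on $C_i\cup C_j$, not with the exchange axiom you use.)

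Your route via supermodularity and the union-of-tight-sets lemma is the standard one, but the step ``iteratively applying the union-of-tight-sets lemma produces a single tight set $B^*$'' is not justified as written. The union lemma needs $k|A\cap B|-\ell\ge 0$, a condition on the \emph{intersection}; two tight blocks $B_e,B_{e'}$ arising from different edges $e,e'\in I_2\setminus I_1$ may have $|B_e\cap B_{e'}|<\ell/k$ (or be disjoint), and taking $n$ large does nothing to prevent this. Your ``main obstacle'' paragraph mislocates the difficulty: a tight set $S$ automatically satisfies $k|S|-\ell=|(I_1)_S|\ge 0$, so that condition is vacuous; it is the intersections that may be too small, and this is not controlled by $n$. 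The repair is to stop merging once the surviving blocks $B^1,\dots,B^t$ have pairwise intersections of size $<\ell/k$, observe that sparsity then forces $(I_1)_{B^i\cap B^j}=(I_2)_{B^i\cap B^j}=\emptyset$ so the per-block inequalities $|(I_2)_{B^i}|\le k|B^i|-\ell=|(I_1)_{B^i}|$ sum without overcount, and conclude $|I_2|\le|I_1|$ as before. Alternatively, one can sidestep the cascade entirely via Edmonds' construction of a matroid from the intersecting-submodular function $F\mapsto k|V(F)|-\ell$ on nonempty edge sets.
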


When $\ell\ge dk$, all the edges in a sparse hypergraph
have dimension at least $d$, because otherwise the small edges
would violate sparsity and the matroid would be trivial. The $(k,\ell)$-sparsity matroid (for $0\leq \ell < dk$) is defined on the ground set  $K^+_n$, the complete hypergraph on $n$ vertices, where edges of dimension $d$ have multiplicity $dk$.  

\subsectionD{Pebble games}{pebblegames}
Pebble games are a family of simple construction rules for sparse hypergraphs.
For history and references, see \cite{pebblegame,hypergraphs}.
In a nutshell, the pebble game starts with an empty set
of vertices with $k$ pebbles on each vertex and proceeds through a sequence of
moves. Each move either adds a directed edge
or reorients one that is already there, using the location of the pebbles
on the graph to determine the allowed moves at each step.

Pebble games are indexed by non-negative integer parameters $k$
and $\ell$. 
Initially, every vertex starts with $k$ pebbles on
it.  An edge may be added if at least 
$\ell+1$ pebbles are present on its
endpoints, otherwise it is rejected.  When an edge is added, 
one of the pebbles is picked up from an endpoint 
and used to ``cover''  the new edge,
which is then directed away from that endpoint.  
Pebbles may be moved by reorienting edges.
If an endpoint of an edge, other than its tail, has 
at least one pebble, this pebble may be used to cover
the edge.  The edge is subsequently reoriented away from that endpoint, 
and the pebble previously covering the edge is returned 
to the original tail.

The pebble game is used as a basis for algorithms that solve
the fundamental sparse graph problems in \cite{pebblegame,hypergraphs}.  
The next proposition captures the results needed later.

\begin{proposition}[{\bf (\cite{hypergraphs})}]\proplab{pebble-game} 
{\bf Pebble games for sparse hypergraphs:}
Using the pebble game paradigm, the {\bf Decision} problem for sparse hypergraphs with edges of dimension $d$ can be solved in $O(dn^2)$ time and $O(n)$ space.  The {\bf Spanning}, {\bf Extraction} and {\bf Components} problems for hypergraphs with $m$ edges of dimension $d$ 
can be solved in $O(n^d)$ time and space or $O(nmd)$ time and $O(m)$ space.
{\bf Optimization} can be solved in either of these running times plus 
an additional $O(m\log m)$.
\end{proposition}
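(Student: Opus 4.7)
The plan is to invoke the standard characterization of $(k,\ell)$-sparse hypergraphs as exactly those accepted by the $(k,\ell)$-pebble game, and then to analyze the cost of each fundamental operation as a sequence of pebble-game primitive moves over the directed graph built so far.

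For \textbf{Decision}, the algorithm streams through the input edges and attempts to add each one. Accepting an edge $e$ requires assembling at least $\ell+1$ pebbles on the $d$ endpoints of $e$. The gathering primitive is a depth-first search along the oriented edges that either locates a free pebble and rotates it back along the search path to an endpoint, or reports failure. Since the accepted subgraph is sparse by construction, it contains at most $kn-\ell = O(n)$ directed edges, so each DFS runs in $O(n)$ time. Each input edge triggers $O(d)$ such searches (one per missing pebble across its $d$ endpoints), and at most $O(n)$ edges can ever be accepted, yielding the $O(dn^2)$ time bound; only the current pebble-game state need be stored, giving $O(n)$ space.

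For \textbf{Spanning}, \textbf{Extraction}, and \textbf{Components}, the algorithm must process every input edge, including rejected ones. Each attempt costs $O(dn)$ in pebble moves, totalling $O(nmd)$; alternatively, if one enumerates all $O(n^d)$ hyperedges of the complete hypergraph $K^+_n$ as candidates, the bound becomes $O(n^d)$ time and space. \textbf{Components} is handled by the reach-and-test paradigm: after running the pebble game, one identifies the maximal tight subgraphs by checking, for each pair of vertices, whether a hypothetical new edge can be accepted; this amounts to $O(n)$ additional DFS calls per vertex. For \textbf{Optimization}, \propref{sparse-hypergraphs} guarantees that sparse hypergraphs form the independent sets of a matroid, so the matroid greedy algorithm applies: sort the edges by weight in $O(m\log m)$ time, then feed them in decreasing order to Extraction.

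The main technical obstacle --- already addressed in \cite{pebblegame,hypergraphs} --- is justifying the invariant that drives every step above: the set of accepted directed edges remains $(k,\ell)$-sparse at all times, and an edge is rejected by the pebble game precisely when adjoining it would destroy sparsity. Establishing this requires a Hall-type argument showing that whenever $e$ can legitimately be added while preserving sparsity, the required $\ell+1$ pebbles can in fact be reassembled on its endpoints by a sequence of edge reorientations. Once this invariant is in hand, the complexity claims in the proposition reduce to routine accounting of the number of DFS traversals performed.
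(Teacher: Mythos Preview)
This proposition is not proved in the paper: it appears in the Preliminaries as a result quoted from \cite{hypergraphs}, with no accompanying argument. There is thus no in-paper proof against which to compare your sketch.

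Your outline is broadly in the spirit of the cited work, but the justification you give for the $O(n^d)$ alternative bound is incorrect. Enumerating all $O(n^d)$ hyperedges of $K_n^+$ is not what \textbf{Extraction} does (it operates on the given input $G$), and even if one did so, running an $O(dn)$ pebble-collection step on each candidate would cost $O(dn^{d+1})$, not $O(n^d)$. The actual source of the $O(n^d)$ bound --- as the paper itself remarks when analyzing its graded-sparsity algorithm --- is \emph{component maintenance}: once the $(k,\ell)$-components are tracked, an incoming edge whose endpoints already lie in a common component can be rejected in $O(d)$ time without any DFS, and the dominant cost becomes storing and updating the component structure. Your description of \textbf{Components} (testing a hypothetical edge for every pair of vertices) has the same defect: as written it is specific to $d=2$ and costs $\Theta(n^3)$, whereas the intended mechanism is again incremental component maintenance during the sweep.
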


Not that in a hypergraph with edges of dimension $d$, $m$ may 
be $\Theta(n^d)$.

\subsectionD{Related work}{related}
Because of their relevance to rigidity, Laman graphs, and the
related families of sparse graphs, have been extensively studied. Classic papers include 
\cite{whiteley:unionMatroids:1988,whiteley:Matroids:1996}, where extensions to sparsity matroids on graphs and hypergraphs first appeared.  A more detailed history and comprehensive developments, as well as recent developments connecting sparse graphs and pebble game algorithms appear in \cite{pebblegame,hypergraphs}.
The graded matroids of this paper are a further generalization.

Another direction is encountered in  \cite{servatiusB:whiteley:constrainingCAD:1999}, which consider length and direction constraints (modeled as edges of different colors).  The associated rigidity counts require $(2,3)$-sparsity for monochromatic subgraphs and $(2,2)$-sparsity for bichromatic subgraphs. This extends sparsity  in a slightly different direction than we do here, and is neither a specialization nor a generalization of our graded sparsity.

Our algorithms fall into the family of generalized pebble games for
sparse hypergraphs \cite{pebblegame,hypergraphs,components}.  They are generalizations of an \cite{jacobs:hendrickson:PebbleGame:1997a}'s algorithm for Laman graphs, formally analyzed in \cite{berg:jordan:2003}.

\sectionD{Graded sparsity}{gradedmatroids}
In this section, we define the concept of graded sparsity 
and prove the main result.

\paragraphD{Definitions}{graded}
Let $G=(V,E)$ be a hypergraph.
A {\bf grading} $(E_1,E_2,\ldots,E_s)$ of $E$ 
is a strictly decreasing sequence of sets of edges 
$E=E_1\supsetneq E_2\supsetneq \cdots\supsetneq E_s$. An example is the {\bf standard grading}, where we fix the $E_i$'s to be edges of dimension at least $i$. This is the situation for the pinned Laman graphs of \cite{sliders}, where the grading consists of $\geq 1$-edges and $2$-edges.

Fix a grading on the edges of $G$. Define $G_{\ge i}$ as the subgraph of $G$ induced by
$\cup_{j\ge i}E_i$. 
Let $\frakell=\{\ell_1 < \ell_2 < \cdots < \ell_s \}$
be a vector of $s$ non-negative integers. We say that $G$ is 
{\bf $(k,\frakell)$-graded sparse} if
$G_{\ge i}$ is $(k,\ell_i)$-sparse for every $i$; $G$ is 
{\bf $(k,\frakell)$-graded tight}
if, in addition, it is $(k,\ell_1)$-tight.  To differentiate this concept from the
sparsity of \propref{sparse-hypergraphs}, we refer to $(k,\frakell)$-graded 
sparsity as {\bf graded sparsity}.  The {\bf components} of a graded sparse graph
$G$ are the $(k,\ell_1)$-components of $G$.

\paragraphD{Main result}{mainresult}
It can be easily shown that the family of $(k,\frakell)$-graded sparse graphs is the intersection of $s$
matroidal families of graphs. The main result of this paper is the following stronger property.

\begin{theorem}[{\bf (Graded sparsity matroids)}]\theolab{graded-matroid}
${}$\\
The $(k,\frakell)$-graded sparse hypergraphs form the independent sets of a matroid. For
 large enough $n$, the $(k,\frakell)$-graded tight hypergraphs are its bases.
\end{theorem}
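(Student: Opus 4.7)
The plan is to verify the two matroid axioms for the family of graded sparse hypergraphs directly, bootstrapping off of \propref{sparse-hypergraphs} applied level by level. The hereditary property is immediate: any sub-hypergraph $H\subseteq G$ satisfies $H_{\ge i}\subseteq G_{\ge i}$ for every $i$, and since $(k,\ell_i)$-sparsity is hereditary, $H$ inherits graded sparsity from $G$.

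For the exchange axiom, let $I,J$ be graded sparse hypergraphs with $|I|<|J|$; the goal is to produce $e\in J\setminus I$ such that $I+e$ is graded sparse. First I would let $i^{\star}:=\max\{i:|J_{\ge i}|>|I_{\ge i}|\}$, which is well-defined because the inequality holds at $i=1$. Applying \propref{sparse-hypergraphs} to the $(k,\ell_{i^\star})$-sparsity matroid on ground set $E_{i^\star}$, the usual matroid exchange yields an $e\in J_{\ge i^\star}\setminus I_{\ge i^\star}$ with $I_{\ge i^\star}+e$ still $(k,\ell_{i^\star})$-sparse. The maximality of $i^\star$ forces $|(J\setminus I)\cap(E_{i^\star}\setminus E_{i^\star+1})|>0$, so $e$ can be chosen to have level exactly $i^\star$; then $e\notin E_j$ for $j>i^\star$, and sparsity at those higher levels is untouched.

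The delicate verification is at levels $j<i^\star$, where $I_{\ge j}+e$ must remain $(k,\ell_j)$-sparse. The strategy is to assume a tight witness $A\subseteq I_{\ge j}$ so that $A\cup\{e\}$ violates $(k,\ell_j)$-sparsity, restrict it to $E_{i^\star}$, and combine the $(k,\ell_{i^\star})$-sparsity of $I_{\ge i^\star}+e$ with the graded sparsity of $I$ on the intermediate strata $E_{j+1},\ldots,E_{i^\star}$ to reach a contradiction. If the initial $e$ fails to clear every lower level, one refines the choice via the matroid-circuit structure at level $i^\star$ (i.e., iterate exchange within the $(k,\ell_{i^\star})$-matroid to swap $e$ for another level-$i^\star$ edge of $J\setminus I$). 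This cross-level reconciliation is where the main obstacle lies: lower levels enjoy a weaker $\ell_j$ bound but involve strictly more edges, and a single $e$ is not automatically compatible with every level without a careful argument.

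Once matroidality is established, the bases are the maximal graded sparse hypergraphs. For $n$ large enough, \propref{sparse-hypergraphs} guarantees that $(k,\ell_1)$-tight hypergraphs exist. A graded tight hypergraph is by definition both $(k,\ell_1)$-tight and graded sparse, and any attempted extension would violate $(k,\ell_1)$-sparsity; hence graded tight hypergraphs are precisely the maximal independent sets, i.e., the bases.
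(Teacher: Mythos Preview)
Your plan has a genuine gap at the step ``$e$ can be chosen to have level exactly $i^\star$.'' Knowing that $(J\setminus I)\cap(E_{i^\star}\setminus E_{i^\star+1})$ is nonempty does not let you force the exchange edge to lie there: the augmentation axiom for the $(k,\ell_{i^\star})$-matroid only promises \emph{some} $e\in J_{\ge i^\star}\setminus I_{\ge i^\star}$, not one of a prescribed level. Here is a concrete failure with $k=1$, $\frakell=(0,1)$ and the standard grading on vertex set $\{a,b,c,d,e\}$. Take $I=\{ab,bc,cd,\text{loop}_a\}$ and $J=\{\text{loop}_a,\text{loop}_b,\text{loop}_c,\text{loop}_d,de\}$. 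Both are graded sparse, $|I|=4<5=|J|$, and $i^\star=1$ since $|J_{\ge 2}|=1<3=|I_{\ge 2}|$. The level-$1$ edges of $J\setminus I$ are $\text{loop}_b,\text{loop}_c,\text{loop}_d$, and adding any of them to $I$ already violates $(1,0)$-sparsity (for instance $I+\text{loop}_b$ has three edges on $\{a,b\}$). The \emph{only} $e\in J\setminus I$ with $I+e$ graded sparse is the $2$-edge $de$, which has level $2$. So fixing the level of $e$ to $i^\star$ cannot work in general, and the subsequent paragraph---which you yourself flag as ``where the main obstacle lies''---is built on this false premise. Your fallback of iterating exchange within the $(k,\ell_{i^\star})$-matroid to reach another level-$i^\star$ edge does not help either: in this example no level-$i^\star$ edge works at all.

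The paper sidesteps this cross-level difficulty by verifying the circuit axioms rather than the independence axioms. It builds a family $\mathcal{C}$ of minimal obstructions layer by layer (the $(k,\ell_s)$-circuits at the top level, and at each lower level $i$ only those $(k,\ell_i)$-circuits not already containing a member of $\mathcal{C}$), shows that graded sparsity is precisely avoidance of $\mathcal{C}$, and then checks circuit elimination by a single edge count: if $C_i,C_j\in\mathcal{C}$ share an edge, then $|E(C_i\cup C_j)|\ge k\,|V(C_i\cup C_j)|-\ell_i+2$, so $(C_i\cup C_j)-y$ is not $(k,\ell_i)$-sparse and hence contains an element of $\mathcal{C}$. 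Circuit elimination only requires one inequality on the union, whereas the augmentation route forces you to reconcile a single chosen edge with every level simultaneously---which, as the example shows, need not be possible at the level your heuristic selects.
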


The proof of \theoref{graded-matroid} is based on the circuit axioms 
for matroids.  See \cite{oxley:matroid} for an introduction to 
matroid theory.  We start by
formulating $(k,\frakell)$-graded sparsity in terms of circuits.  

For a $(k,\ell)$-sparsity matroid, the $(k,\ell)$-circuits are exactly the graphs on $n'$
vertices with $kn'-\ell+1$ edges such that every proper subgraph is $(k,\ell)$-sparse.

We now recursively define a family $\mathcal{C}$ as follows: $\mathcal{C}_{\ge s}$
is the set of $(k,\frakell_s)$-circuits in $G_{ s}$; for $i<s$, $\mathcal{C}_{\ge i}$
is the union of $\mathcal{C}_{\ge i+1}$ and the $(k,\frakell_i)$-circuits of $G_{\ge i}$
that do not contain any of the elements of $\mathcal{C}_{\ge i+1}$.  Finally, set
$\mathcal{C}=\mathcal{C}_{\ge 1}$.

{\bf Example:} As an example of the construction of $\mathcal{C}$, we consider the 
case of $k=1$, $\frakell=(0,1)$ with the standard
grading.  $\mathcal{C}_{\ge 2}$ consists of the $(1,1)$-circuits of edges; a fundamental
result of graph theory \cite{Na61,tutte:decomposing-graph-in-factors-1961} says that
these are the simple cycles of edges.  Using the identification of $(1,0)$-tight 
graphs with graphs having exactly one cycle per connected component (see \cite{maps}
for details and references), we infer that the $(1,0)$-circuits are pairs of cycles sharing 
edges or connected by a path.  Since cycles involving edges are already in $\mathcal{C}_{\ge 2}$,
$\mathcal{C}_{\ge 1}$ adds only pairs of loops connected by a simple path.

\medskip

We now prove a structural property of $\mathcal{C}$ that 
relates $\mathcal{C}$ to $(k,\frakell)$-graded
 sparsity and  will be used in the proof
of \theoref{graded-matroid}.

\begin{lemma}\lemlab{circuit-dimension}
Let $d$, $k$, and $\ell_i$ be such that $(d-1)k\le \ell_i<dk$. 
Then, every set in $\mathcal{C}_{\ge i}$
is either a single edge or has only edges of dimension at least $d$.
\end{lemma}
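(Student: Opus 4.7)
The plan is to trace any $C \in \mathcal{C}_{\ge i}$ back to the level $j \ge i$ at which it was introduced into $\mathcal{C}$, and then to invoke the circuit axiom---that every proper subset of a circuit is independent---on the single-edge subsets of $C$. The dimension bound then falls out of a short arithmetic calculation using $\ell_i \ge (d-1)k$.

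By the recursive construction of $\mathcal{C}$, there is some index $j$ with $i \le j \le s$ such that $C$ is a $(k,\ell_j)$-circuit of $G_{\ge j}$ (at level $j = s$ this is the base of the recursion; for $j < s$ the definition additionally requires $C$ to contain no element of $\mathcal{C}_{\ge j+1}$, but we will not need this here). Since $\ell_1 < \ell_2 < \cdots < \ell_s$ is strictly increasing and our hypothesis gives $\ell_i \ge (d-1)k$, we have $\ell_j \ge \ell_i \ge (d-1)k$.

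Now suppose $C$ is not a single edge, and fix any $e \in C$ of dimension $d'$. The singleton $\{e\}$ is a proper subgraph of $C$, so by the circuit axiom it is $(k,\ell_j)$-sparse, which gives $1 \le kd' - \ell_j$, and hence
\[ d' \;\ge\; \frac{\ell_j + 1}{k} \;\ge\; \frac{(d-1)k + 1}{k} \;=\; d - 1 + \frac{1}{k} \;>\; d - 1. \]
Since $d'$ is an integer and $k \ge 1$, this forces $d' \ge d$, as claimed.

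The main obstacle is really just a bookkeeping point: one must keep track of the level $j$ at which $C$ enters $\mathcal{C}_{\ge i}$ and apply the sparsity condition with the correct parameter $\ell_j$ rather than $\ell_i$. The strict monotonicity of $\frakell$ guarantees $\ell_j \ge \ell_i$, so the bound obtained at level $j$ is at least as strong as the one we need, and the single arithmetic step above closes the argument.
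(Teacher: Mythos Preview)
Your proof is correct and follows essentially the same approach as the paper: both identify $C$ as a $(k,\ell_j)$-circuit for some $j\ge i$, then use that every proper subgraph of a circuit is sparse, applied to the singleton $\{e\}$. The only difference is cosmetic---the paper cites an external structure theorem from \cite{hypergraphs} to conclude that $(k,\ell_i)$-sparse graphs have no small edges, whereas you carry out the one-line arithmetic directly; your version is slightly more self-contained.
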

\begin{proof}
A structure theorem from \cite{hypergraphs} says that for $k$ and $\ell_i$
satisfying the condition in the lemma, all sparse graphs have only edges of dimension
at least $d$ or are empty.
Since any proper subgraph of a $(k,a)$-circuit for $a\ge \ell_i$
is $(k,\ell_i)$-sparse, either the circuit has only edges of dimension at
least $d$ or only empty proper subgraphs, i.e. it has exactly one edge.
\end{proof}

\begin{lemma}\lemlab{circuit-sparsity}
A hypergraph $G$ is $(k,\frakell)$-graded sparse if and only if it does not
contain a subgraph in $\mathcal{C}$.
\end{lemma}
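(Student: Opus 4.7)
The plan is to prove both directions by exploiting the recursive structure of $\mathcal{C}$; in each direction the matching recursion on $i$ does most of the work.

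For the forward direction, assume $G$ is $(k,\frakell)$-graded sparse, so every $G_{\ge i}$ is $(k,\ell_i)$-sparse. Suppose toward contradiction that some subgraph $H$ of $G$ lies in $\mathcal{C}$, and let $j$ be the largest index with $H\in\mathcal{C}_{\ge j}$. Tracking back through the recursive definition, the only way $H$ can enter $\mathcal{C}_{\ge j}$ (and not already be in $\mathcal{C}_{\ge j+1}$) is as a $(k,\ell_j)$-circuit in $G_{\ge j}$, i.e.\ a circuit whose edges all have grading rank at least $j$. Hence $H\subseteq G_{\ge j}$, which directly contradicts the $(k,\ell_j)$-sparsity of $G_{\ge j}$. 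Equivalently, this step can be phrased as a downward induction on $i$ showing that no subgraph of $G$ lies in $\mathcal{C}_{\ge i}$: the base case $i=s$ is sparsity of $G_{\ge s}$, and the inductive step splits each candidate in $\mathcal{C}_{\ge i}$ between those already in $\mathcal{C}_{\ge i+1}$ (ruled out by the induction hypothesis) and new $(k,\ell_i)$-circuits in $G_{\ge i}$ (ruled out by sparsity).

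For the backward direction, I prove the contrapositive. Assume $G$ is not $(k,\frakell)$-graded sparse, so there is some index $i$ for which $G_{\ge i}$ fails to be $(k,\ell_i)$-sparse. Then $G_{\ge i}$ contains a $(k,\ell_i)$-circuit $H$. I case-split on $H$: either $H$ contains some subgraph $H'\in\mathcal{C}_{\ge i+1}$, in which case $H'\subseteq H\subseteq G$ is a subgraph of $G$ lying in $\mathcal{C}$ and we are done; or $H$ contains no element of $\mathcal{C}_{\ge i+1}$, in which case the recursive definition places $H$ itself in $\mathcal{C}_{\ge i}\subseteq \mathcal{C}$. Either way, $G$ contains a subgraph in $\mathcal{C}$.

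The main delicacy is bookkeeping in the recursive definition of $\mathcal{C}$: when passing from $\mathcal{C}_{\ge i+1}$ to $\mathcal{C}_{\ge i}$ we discard those $(k,\ell_i)$-circuits of $G_{\ge i}$ that already contain some element of $\mathcal{C}_{\ge i+1}$, and we must check that this pruning does not throw away any certificate of non-sparsity. The case split above resolves this cleanly, since any discarded $(k,\ell_i)$-circuit was discarded precisely because it already has a subgraph in $\mathcal{C}$. I do not expect to need \lemref{circuit-dimension} for this lemma; that structural fact about edge dimensions will only be invoked later when assembling the matroid proof of \theoref{graded-matroid}.
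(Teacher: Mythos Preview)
Your proof is correct and follows essentially the same approach as the paper's: the forward direction observes that every element of $\mathcal{C}$ is a $(k,\ell_j)$-circuit inside $G_{\ge j}$ for some $j$, contradicting sparsity, and the backward direction takes a $(k,\ell_i)$-circuit in a non-sparse $G_{\ge i}$ and case-splits on whether it (or a subgraph of it) already lies in $\mathcal{C}$. You are also right that \lemref{circuit-dimension} is not needed here; the paper likewise reserves it for the circuit-elimination step in the proof of \theoref{graded-matroid}.
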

\begin{proof}
It is clear that all the $(k,\frakell)$-graded sparse hypergraphs avoid the
subgraphs in $\mathcal{C}$, since they cannot contain {\it any}
$(k,\ell_i)$-circuit of $G_{\ge i}$.

For the reverse inclusion, suppose that $G$ is not sparse.  Then for
some $i$, $G_{\ge i}$ is not $(k,\ell_i)$-sparse.  This is
equivalent to saying that $G_{\ge i}$ contains some $(k,\ell_i)$-circuit $C$.
There are now two cases: if $C\in \mathcal{C}$ we are done; if
not, then some $C'\subsetneq C$ is in $\mathcal{C}$,
and $G$ contains $C'$, which completes the proof.
\end{proof}

We are now ready to prove \theoref{graded-matroid}.
\begin{proof}[of \theoref{graded-matroid}]
\lemref{circuit-sparsity} says that it is sufficient to verify that $\mathcal{C}$
obeys the circuit axioms.  By construction, $\mathcal{C}$ does not contain the empty
set and no sets of $\mathcal{C}$ contain each other.

What is left to prove is that, 
for $C_i, C_j\in \mathcal{C}$ with $y\in C_i\cap C_j$,
$(C_i\cup C_j)-y$ contains an element of $\mathcal{C}$.
Suppose that $C_i$ is a $(k,\ell_i)$-circuit
and $C_j$ is a $(k,\ell_j)$-circuit with $j\ge i$.  Let $m_i, m_j,m_\cup$ and $m_\cap$ be the
number of edges in $C_i$, $C_j$, $C_i\cup C_j$, and $C_i\cap C_j$, respectively.
Similarly define $n_i, n_j, n_\cup$, and $n_\cap$ for the size of the vertex sets.

\lemref{circuit-dimension} implies that $y$ has dimension $d$, where $\ell_j<dk$;
otherwise $C_j$ would have to be the single edge $y$, and this would block $C_i$'s inclusion in
$\mathcal{C}$ (since $j\geq i$).  Because $C_i\cap C_j\subsetneq C_j$, we have $n_\cap\ge d$ and
$m_\cap\le kn_\cap-\frakell_j$.  By counting edges, we have
\begin{align*}
    m_\cup & = m_i+m_j-m_\cap\ge m_i+m_j-(kn_\cap-\ell_j) \\
    &= kn_i-\ell_i+1+kn_j-\ell_j + 1 -(kn_\cap-\ell_j)  \\
    & = kn_\cup-\ell_i+2.
\end{align*}
It follows that $C_i\cup C_j$ cannot be $(k,\ell_i)$-sparse, and by \lemref{circuit-sparsity},
this is equivalent to having an element of $\mathcal{C}$ as a subgraph.
\end{proof}

\sectionD{Algorithms}{algorithms-graded}
In this section, we start with the algorithm for {\bf Extraction}  
and then derive algorithms for the other problems from it. 

\begin{algorithm}{\bf Extraction of a graded sparse graph} \alglab{graded}

\noindent {\bf Input:} A hypergraph $G$, with a grading on the edges. \\
\noindent {\bf Output:} A maximum size $(k,\frakell)$-graded sparse subgraph of $G$.

\noindent {\bf Method:} Initialize the pebble game with $k$ 
pebbles on every vertex.

Iterate over the edges of $G$ in an arbitrary order. 
For each edge $e$:
\begin{enumerate}
    \item Try to collect at least 
    $\ell_1+1$ pebbles on the endpoints of $e$.
    If this is not possible, reject it.
    \item If $e\in E_1$, accept it, using the rules of the $(k,\ell_1)$-pebble game.
    \item Otherwise, copy the configuration of the pebble game into a ``shadow graph''.  Set $d\leftarrow 1$.
    \item In the
    shadow, remove every edge of $E_d$, and put a pebble on the tail 
 	of the removed edges. 
	\item Try to collect $\ell_{d+1}-\ell_d$ more pebbles on the endpoints of $e$.
	There are three possibilities: (1) if the pebbles cannot be collected, reject $e$
	and discard the shadow; (2) otherwise there are $\ge \ell_{d+1}+1$ pebbles on the
	endpoints of $e$, if $e\in E_{d+1}$ discard the shadow and accept $e$ using the rules of the
	$(k,\ell_{d+1})$ pebble game; (3) otherwise, if $e \not\in E_{d+1}$, set $d\leftarrow d+1$
	and go to step 4.
\end{enumerate}

Finally, output all the accepted edges.
\end{algorithm}

{\bf Correctness:} By \theoref{graded-matroid},  adding edges in
any order leads to a sparse graph of maximum size.  What is left to check is that the edges
accepted are exactly the independent ones.

This follows from the fact that moving pebbles in the pebble
game is a reversible process, except when a pebble is moved and then used
to cover a new edge.  Since the pebbles covering hyper-edges in $E_j$, for $j<i$,
would be on the vertices where they are located in the $(k,\frakell_i)$-pebble game,
for $j<i$, then \algref{graded} accepts
edges in $E_i$ exactly when the $(k,\frakell_i)$-pebble game would.  By results
of \cite{pebblegame,hypergraphs}, we conclude that \algref{graded} computes
a maximum size $(k,\frakell)$-graded sparse subgraph of $G$.

{\bf Running time:}
If we maintain components,
the running time is $O(n^{d^{\star}})$, where $d^\star$
is the dimension of the largest hyperedge in the input. Without maintaining components, 
the running time is $O(mnd)$, with the caveat that $m$ can be $\Theta(n^d)$.

\paragraphD{\rm\bf Application to the fundamental problems.}{propblems}
We can use \algref{graded} to solve the remaining fundamental problems. 
For {\bf Decison} a simplification yields a running time of $O(dn^2)$:
checking for the correct number of edges is an $O(n)$ step, and after that
$O(mnd)$ becomes $O(n^2d)$.

For {\bf Optimization}, we
consider the edges in an order sorted by weight.  The correctness of
this approach follows from the characterization of matroids by greedy
algorithms.  Because of the sorting phase, the running time is $O(n^d+m\log m)$.

The components are the $(k,\frakell_1)$-components of the output.  Since
we maintain these anyway, the running time for {\bf Components} is $O(n^d)$.

Finally, for {\bf Extension}, the matroidal property of $(k,\frakell)$-graded sparse graphs implies that it can be solved by using the {\bf Extraction} algorithm on $K^+_n$,
considering the edges of the given independent set first and the rest of $E(K^+_n)$
in any desired order. The solution to {\bf Spanning} is similar. 

{\bf Acknowledgement.}
This research was supported by the NSF CCF 0728783 grant of Ileana Streinu.

\end{document}